\newtheorem{definition}{Definition}
\newtheorem{lemma}{Lemma}
\newtheorem{proposition}{Proposition}
\newcommand{\sign}{\operatorname{sign}}
\begin{document}

\title{$\mathcal O(n)$ working precision inverses for symmetric tridiagonal Toeplitz matrices with $\mathcal O(1)$ floating point calculations. 
}


\author{Manuel Radons         
}




\maketitle

\textbf{Abstract: }
\textit{
A well known numerical task is the inversion of large symmetric tridiagonal Toeplitz matrices, i.e., matrices whose entries equal $a$ on the diagonal and $b$ on the extra diagonals ($a, b\in \mathbb R$).  
The inverses of such matrices are dense and there exist well known explicit formulas by which they can be calculated in $\mathcal O(n^2)$. In this note we present a simplification of the problem that has proven to be rather useful in everyday practice: If $\vert a\vert > 2\vert b\vert$, that is, if the matrix is strictly diagonally dominant, its inverse is a band matrix to working precision and the bandwidth is independent of $n$ for sufficiently large $n$. Employing this observation, we construct a linear time algorithm for an explicit tridiagonal inversion that only uses $\mathcal O(1)$ floating point operations. On the basis of this simplified inversion algorithm we outline the cornerstones for an efficient parallelizable approximative equation solver.}

\textbf{Keywords}\ \textit{Tridiagonal, symmetric,  Toeplitz, Explicit inverse, Working precision, Linear time, Equation solving}

\section{Introduction}
The inversion of symmetric tridiagonal Toeplitz matrices is a rather prominent task in applied mathematics. In general, the inverse of such matrices is dense and there exist explicit formulas, such as those presented in \cite{RefFons}, that allow performance of the inversion in $\mathcal O(n^2)$. However, for large $n$ ($\ge 100,000$), which one encounters frequently in numerical applications, this quadric complexity is too costly. 

In the second section of this note we derive a simplified inversion formula for strictly diagonally dominant matrices of the (symmetric tridiagonal Toeplitz) form \eqref{symmTri} and simplify it further in section three. We then use this simplified form to devise a Matlab style pseudocode for an explicit inversion with $\mathcal O(n)$ read/write operations and integer additions (in the form of running indices) that uses only $\mathcal O(1)$ floating point operations.
In the final section we use the simplified inverse structure to outline an approximative equation solver that runs in roughly $4n$ fused multiply-adds and is communication free and thus parallelizable in a straightforward fashion.

\section{First Simplification of Explicit Inversion}\label{intro}

It is shown in \cite[Cor. 4.2.]{RefFons} that the inverse of an $n\times n$ tridiagonal matrix of the form
\begin{align}\label{symmTri}
	T\ =\ \begin{bmatrix}
	    	a & b &&& \\
	    	b & a & b &&& \\
	    	 &\ddots &\ddots &\ddots & \\
	    	 &&b&a&b \\
	    	 &&&b&a
	    \end{bmatrix}.
\end{align}
 is given by the formula
\begin{align*}
\left(T^{-1}\right)_{ij}\ =\  \left\{\begin{array}{ll} (-1)^{i+j}\frac 1b \frac{U_{i-1}(a/2b)U_{n-j}(a/2b)}{U_n(a/2b)} & \text{if}\ i\le j \\ \\  (-1)^{i+j}\frac 1b \frac{U_{j-1}(a/2b)U_{n-i}(a/2b)}{U_n(a/2b)}& \text{if}\ i>j  \end{array}\right. ,
\end{align*}
with
\begin{align}\label{U_n_x}
U_n(x)\ =\ \frac{r_+^{n+1} - r_-^{n+1}}{r_+ - r_-},
\end{align}
where
\begin{align*}
r_\pm\ =\ x\pm \sqrt{x^2-1}
\end{align*}
are the two solutions of the quadratic equation $r^2 - 2xr + 1=0$. Let $\vert x\vert>1$, i.e., $\vert a\vert > 2\vert b\vert$. Then the solutions $r_\pm$ are real and we have
\begin{align*}
r_+r_-\ =\ \left(x+\sqrt{x^2-1}\right)\left(x-\sqrt{x^2-1}\right)\ =\ x^2 -\left(\sqrt{x^2-1}\right)^2\ =\ 1.
\end{align*}
Consequently,
\begin{align}\label{substMinusLog}
r_+\ =\ \frac{1}{r_-},\qquad \text{which implies}\qquad \log\vert r_+\vert \ =\ - \log\vert r_-\vert
\end{align}
for arbitrary bases of the logarithm. Moreover, we either have 
\begin{align*}
\vert r_+\vert\ >\ 1 \qquad \text{or}\qquad \vert r_-\vert\ >\ 1 \ . 
\end{align*}
We assume, without loss of generality, the first case, i.e., $\vert r_+\vert > 1$. Moreover, we define 
\begin{align}\label{subst2log}
\phi \ := \ \log_2 \vert r_+\vert\ >\ 0 \qquad \text{and}\qquad \tau\ := \ b(r_+ -r_-)\ .
\end{align}

Let $\delta\in \mathbb N_{>0}$. We call an arithmetic whose smallest representable magnitude is $2^{-\delta}$ a $\delta$-arithmetic. In this sense, for example, single precision using subnormal representation is a $149$-arithmetic (see, e.g., \cite{RefFloat}).
\begin{definition}
We say $s,t\in\mathbb R$ are numerically equal in $\delta$-arithmetic, if $\vert s-t\vert < 2^{-\delta}$.
\end{definition}

In the following we assume $i\le j$. Due to the symmetry of $T$ -- and thus of $T^{-1}$ -- analogous results follow for the case $i>j$. Given said assumption it holds

\begin{align}\label{subst1}
\left(T^{-1}\right)_{ij}  \overset{\eqref{U_n_x}}=&\ (-1)^{i+j}\ \frac 1\tau \  \frac{(r_+^i -r_-^i)(r_+^{n-j+1} -r_-^{n-j+1})}{r_+^{n+1} -r_-^{n+1}}\\ 
\overset{\eqref{substMinusLog}}=&\ (-1)^{i+j}\ \frac 1\tau \  \frac{r_+^{n+i-j+1} -r_+^{n-i-j+1} -r_+^{-n+i+j-1} +r_+^{-n-i+j-1}}{r_+^{n+1} -r_+^{-n-1}}\ .\label{invSimple}
\end{align}
Assume $i\ge \left\lceil \frac n2 \right\rceil$. We investigate the difference between the exact value \eqref{invSimple} of  $\left(T^{-1}\right)_{ij}$ and the simplified term 
\begin{align}\label{simpleTerm}
(-1)^{i+j}\ \frac 1\tau \ \ \left( r_+^{i-j}  -r_+^{-2(n+1)+i+j} \right)
\end{align}
for large $n$. 
\begin{lemma}\label{iGeOneHalf}
Let $i\ge \left\lceil \frac n2\right\rceil$ and $10\le k\in \mathbb N$ and define 
\begin{align}
N \ := \ \left\lceil \frac{k}{\phi}\right\rceil\ .
\end{align}
Then for all $n\ge N$ it holds 
\begin{align}\label{lemmaStatement}
\left\vert \left(T^{-1}\right)_{ij}\ -\ (-1)^{i+j}\ \frac 1\tau \ \ \left( r_+^{i-j}  -r_+^{-2(n+1)+i+j} \right) \right\vert \ < \ 3\left\vert \frac{1}{\tau 2^k}\right\vert \ .
\end{align}
\end{lemma}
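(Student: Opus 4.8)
As throughout we assume $|r_+|>1$. The idea is to display the left-hand side of \eqref{lemmaStatement} as a small perturbation of the simplified term \eqref{simpleTerm}, obtained by dividing the numerator and denominator of the quotient in \eqref{invSimple} by the dominant power $r_+^{n+1}$. Writing $\epsilon:=r_+^{-2(n+1)}$ and carrying out this division, one observes that the power $r_+^{-2n+i+j-2}$ appearing in the numerator coincides with $r_+^{-2(n+1)+i+j}$, i.e., with the second term in the bracket of \eqref{simpleTerm}. Hence
\[
\left(T^{-1}\right)_{ij}\ =\ (-1)^{i+j}\,\frac1\tau\cdot\frac{S+\mu}{1-\epsilon},\qquad S:=r_+^{i-j}-r_+^{-2(n+1)+i+j},\quad \mu:=r_+^{-2n-i+j-2}-r_+^{-(i+j)},
\]
where $S$ is exactly the bracket of \eqref{simpleTerm}. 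A one-line rearrangement then gives
\[
\left(T^{-1}\right)_{ij}-(-1)^{i+j}\,\frac1\tau\,S\ =\ (-1)^{i+j}\,\frac1\tau\cdot\frac{\mu+S\epsilon}{1-\epsilon},
\]
and the lemma is reduced to showing $|(\mu+S\epsilon)/(1-\epsilon)|<3\cdot 2^{-k}$.

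For the second step I would read off the exponent inequalities forced by $i\le j\le n$ and $i\ge\lceil n/2\rceil$. From $i\le j$ we get $|r_+^{i-j}|\le 1$; from $i,j\ge\lceil n/2\rceil$ we get $i+j\ge n$, hence $|r_+^{-(i+j)}|\le|r_+|^{-n}$; from $j\le n$ (and $i\ge 1$) we get $2n+i-j+2\ge n$, hence $|r_+^{-2n-i+j-2}|\le|r_+|^{-n}$; and $|r_+^{-2(n+1)+i+j}|\le|r_+|^{-2}<1$, so $|S|\le 2$. The one quantitative input is $n\ge N=\lceil k/\phi\rceil\ge k/\phi$: since $|r_+|=2^{\phi}$ by \eqref{subst2log}, this gives $|r_+|^{-n}=2^{-\phi n}\le 2^{-k}$ and, better, $|\epsilon|=2^{-2\phi(n+1)}<2^{-2k}$. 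Therefore $|\mu|\le 2\cdot 2^{-k}$, $|S|\,|\epsilon|\le 2\cdot 2^{-2k}$ and $|1-\epsilon|\ge 1-2^{-2k}$, so
\[
\left|\frac{\mu+S\epsilon}{1-\epsilon}\right|\ \le\ \frac{2\cdot 2^{-k}+2\cdot 2^{-2k}}{1-2^{-2k}}\ =\ 2^{-k}\cdot\frac{2+2^{1-k}}{1-2^{-2k}}.
\]
For $k\ge 10$ the last fraction is smaller than $2.01$, so the whole expression is strictly below $3\cdot 2^{-k}$; dividing by $|\tau|$ yields \eqref{lemmaStatement}. The case $i>j$ follows verbatim from the symmetry of $T$ (and $T^{-1}$) recorded before the statement.

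The calculation is essentially bookkeeping, and the only delicate point is the claim, in the first paragraph, that after dividing the four-term numerator of \eqref{invSimple} by $r_+^{n+1}$ exactly the two powers forming $S$ survive while the remaining two collapse into a genuinely negligible remainder $\mu$. Making this precise requires checking both $i+j\ge n$ and $2n+i-j+2\ge n$ under the hypotheses ``$i\ge\lceil n/2\rceil$, $i\le j\le n$'' --- the former uses the lower bound on $i$, the latter only $j\le n$ --- so that both leftover powers are $O(|r_+|^{-n})$ rather than $O(1)$. It is also worth noting that the crude estimate $|\epsilon|\le 2^{-k}$ is not quite sufficient to reach the constant $3$: one should use $|\epsilon|\le 2^{-2k}$, which is legitimate because $\epsilon$ carries the exponent $2(n+1)$ rather than $n$. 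Everything else --- the geometric-type bound $|1-\epsilon|^{-1}\le(1-2^{-2k})^{-1}$ and the final numeric check $(2+2^{-9})/(1-2^{-20})<3$ --- is routine.
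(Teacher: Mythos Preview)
Your proof is correct and follows the same underlying strategy as the paper's: divide numerator and denominator of \eqref{invSimple} by the dominant power $r_+^{n+1}$ and bound the leftover terms using $|r_+|^{-n}\le 2^{-k}$ together with the exponent inequalities forced by $i\ge\lceil n/2\rceil$ and $i\le j\le n$. The only difference is organizational: the paper writes the approximation as $(r_+^{n+i-j+1}-r_+^{-n+i+j-1})/r_+^{n+1}$, then splits the error into two pieces $\xi_1+\xi_2$ (one coming from the denominator correction, one from the dropped numerator terms) and uses the somewhat ad hoc estimate $2^{1.9\phi n}<2^{2\phi(n+1)}-1$ to control the first; your decomposition $(\mu+S\epsilon)/(1-\epsilon)$ isolates the target $S$ directly, handles the denominator via $|1-\epsilon|\ge 1-2^{-2k}$, and reaches the constant $3$ with considerably more margin (you actually get $\approx 2.002$ rather than the paper's essentially tight $1+2=3$).
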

\begin{proof} The left hand term in \eqref{lemmaStatement} equals:
\begin{align*}
&\ \left\vert \left(T^{-1}\right)_{ij}\ -\ (-1)^{i+j}\ \frac 1\tau \ \ \frac{r_+^{n+i-j+1}  -r_+^{-n+i+j-1} }{r_+^{n+1}} \right\vert \\
=&\ \left\vert \left(T^{-1}\right)_{ij}\ -\ \left[\left(T^{-1}\right)_{ij} \frac{r_+^{n+1} -r_+^{-n-1}}{r_+^{n+1}} \ - \ (-1)^{i+j}\ \frac{r_+^{-2(n+1)-i+j}-r_+^{-i-j}}{\tau}\right] \right\vert \\
=&\ \left\vert  \left(T^{-1}\right)_{ij} r_+^{-2(n+1)} \ - \ (-1)^{i+j}\ \frac{r_+^{-2(n+1)-i+j}-r_+^{-i-j}}{\tau} \right\vert\\
\le &\ \left\vert \frac{r_+^{i-j} - r_+^{-i-j} - r_+^{-2(n+1)+i+j} +r_+^{-2(n+1) -i+j}}{\tau (r_+^{2(n+1)}-1)}\right\vert\ +\ \left\vert\frac {r_+^{-2(n+1)-i+j} - r_+^{-i-j}}{\tau}\right\vert \\
=&:\ \xi_1\ +\ \xi_2 \ . 
\end{align*}
Noting that $2^{2\phi(n+1)}>0$, for all $n \ge N$ it holds
\begin{align} \label{kBound}
2^{1.9\phi n}\ <\ 2^{2\phi(n+1)}-1 \qquad \text{and}\qquad  2^k\le 2^{\phi n}\ .
\end{align}
Keeping in mind that by hypothesis $i\ge \left\lceil \frac n2\right\rceil$, we have
\begin{align}
\xi_1 \ <& \ \frac{1}{\vert\tau\vert}\left\vert \frac{r_+^{i-j} - r_+^{-i-j} - r_+^{-2(n+1)+i+j} +r_+^{-2(n+1) -i+j}}{ r_+^{1.9n}}\right\vert\nonumber \\
<& \ \frac{1}{\vert\tau\vert}\left\vert \frac{r_+^{0} - r_+^{-n} - r_+^{-2} +r_+^{-1.5n}}{r_+^{1.9n}}\right\vert\nonumber \\ 
< & \ 4 \left\vert \frac{ r^{-1.9n}}{\tau}\right\vert \ <\ \left\vert \frac{ r^{-n}}{\tau}\right\vert \ .\label{xi1}
\end{align}
Moreover,  
\begin{align}\label{xi2}
\xi_2\ \overset{i\ge \left\lceil \frac n2\right\rceil}< \ \left\vert\frac {r_+^{-1.5n} - r_+^{-n}}{\tau}\right\vert \ < \
2 \left\vert \frac{ r^{-n}}{\tau}\right\vert \ .
\end{align}
Together, \eqref{xi1} and \eqref{xi2} yield
\begin{align*}
\xi_1 \ + \ \xi_2 < 3 \left\vert \frac{ r^{-n}}{\tau}\right\vert \ \le \ 3\left\vert \frac{1}{\tau 2^k}\right\vert \  .
\end{align*}
This completes the proof.
\end{proof}
An analogous statement immediately follows for $i < \left\lceil \frac n2\right\rceil$.
\begin{lemma}\label{iSmallerOneHalf}
Let $i < \left\lceil \frac n2\right\rceil$ and $10\le k\in \mathbb N$ and define 
\begin{align}
N \ := \ \left\lceil \frac{k}{\phi}\right\rceil\ .
\end{align}
Then for all $n\ge N$ it holds 
\begin{align}
\left\vert \left(T^{-1}\right)_{ij}\ -\ (-1)^{i+j}\ \frac 1\tau \ \ \left( r_+^{i-j}  -r_+^{-i-j} \right) \right\vert \ < \ 3\left\vert \frac{1}{\tau 2^k}\right\vert \ .
\end{align}
\end{lemma}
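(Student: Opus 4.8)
The plan is to obtain Lemma~\ref{iSmallerOneHalf} from Lemma~\ref{iGeOneHalf} by the persymmetry of $T^{-1}$, with a hands-on re-run of the previous estimate as the fallback. Because $T$ is symmetric \emph{and} Toeplitz it is invariant under reversal of the index ordering, and hence so is $T^{-1}$: substituting $(i,j)\mapsto(n+1-j,\,n+1-i)$ in the formula of \cite[Cor.~4.2.]{RefFons} and using $U_{(n+1-j)-1}=U_{n-j}$, $U_{n-(n+1-i)}=U_{i-1}$ gives $\left(T^{-1}\right)_{ij}=\left(T^{-1}\right)_{n+1-j,\,n+1-i}$. Set $(p,q):=(n+1-j,\,n+1-i)$, so $p\le q$ since $i\le j$. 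A one-line computation gives $p-q=i-j$, $-2(n+1)+p+q=-i-j$ and $(-1)^{p+q}=(-1)^{2(n+1)-i-j}=(-1)^{i+j}$, so the simplified term of Lemma~\ref{iGeOneHalf} at $(p,q)$ is \emph{identically} the simplified term of Lemma~\ref{iSmallerOneHalf} at $(i,j)$. Hence, as soon as $p=n+1-j\ge\lceil n/2\rceil$, applying Lemma~\ref{iGeOneHalf} at $(p,q)$ with the same $k$ and $N$ and reading off this correspondence yields the bound of Lemma~\ref{iSmallerOneHalf} at $(i,j)$.

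The delicate point is the bookkeeping of the hypotheses under reflection: Lemma~\ref{iGeOneHalf} needs the \emph{smaller} index of the reflected pair, $n+1-j$, to lie in the upper half, which amounts to requiring the \emph{larger} original index $j$ to lie in the lower half — the exact mirror of the assumption $i\ge\lceil n/2\rceil$, and the step where the position hypothesis of Lemma~\ref{iSmallerOneHalf} must be invoked (so the cleanest form of the hypothesis would constrain $j$, say $j<\lceil n/2\rceil$). If one prefers a self-contained proof, I would copy the previous argument line for line. The decomposition into $\xi_1+\xi_2$ goes through: $\xi_1=\bigl\lvert\left(T^{-1}\right)_{ij}\,r_+^{-2(n+1)}\bigr\rvert$ is the \emph{same} four-term expression over $r_+^{2(n+1)}-1$ as in \eqref{xi1} (it does not see which simplification we subtract), hence $\xi_1<\lvert r_+^{-n}/\tau\rvert$ for $n\ge N$ by \eqref{kBound} exactly as before. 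The only genuinely new piece is $\xi_2=\tfrac1{\lvert\tau\rvert}\bigl\lvert r_+^{-2(n+1)+i+j}-r_+^{-2(n+1)-i+j}\bigr\rvert$.

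The main obstacle is therefore the estimate $\xi_2<2\lvert r_+^{-n}/\tau\rvert$. In Lemma~\ref{iGeOneHalf} the two surviving exponents were $-2(n+1)-i+j$ and $-i-j$, and $i\ge\lceil n/2\rceil$ forced both below $-n$ (the first even below $-1.5n$); here the surviving exponents are $-2(n+1)+i+j$ and $-2(n+1)-i+j$, and one has to check that the hypothesis keeps $i+j$ small enough — essentially $i+j\lesssim n$ — to push both below $-n$. Granting that, $\xi_1+\xi_2<3\lvert r_+^{-n}/\tau\rvert\le 3\lvert 1/(\tau 2^k)\rvert$ for $n\ge N$, which is the claim; everything outside this one exponent count is identical to the proof of Lemma~\ref{iGeOneHalf}, which is exactly the sense in which the statement follows at once from it.
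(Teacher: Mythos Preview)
Your approach via persymmetry is exactly the paper's: its entire proof reads ``This follows from Lemma~\ref{iGeOneHalf} and the persymmetry of $T$ and thus of $T^{-1}$.'' Your verification that the reflection $(i,j)\mapsto(n+1-j,\,n+1-i)$ carries the simplified term of Lemma~\ref{iGeOneHalf} into that of Lemma~\ref{iSmallerOneHalf} is precisely what that line is meant to encode.

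You have also put your finger on a genuine gap that the paper's one-liner glosses over. The persymmetry reduction needs $n+1-j\ge\lceil n/2\rceil$, i.e.\ a bound on $j$, whereas the stated hypothesis constrains only $i$. Your direct fallback hits the same wall: the exponent $-2(n+1)+i+j$ in your $\xi_2$ is $\le -n$ only when $i+j\le n+2$, but with merely $i<\lceil n/2\rceil$ one may take $i\approx n/2$, $j=n$, so $i+j\approx 3n/2$ and the error is of order $\lvert r_+^{-n/2}/\tau\rvert$ rather than $\lvert r_+^{-n}/\tau\rvert$ --- the claimed bound then \emph{fails} for $k$ close to $\phi n$. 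Your instinct that ``the cleanest form of the hypothesis would constrain $j$'' is exactly right; the paper effectively imposes $i+j\le n+1$ at the start of the next section, under which both the persymmetry argument and your direct estimate of $\xi_2$ go through cleanly.
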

\begin{proof}
This follows from Lemma \ref{iGeOneHalf} and the persymmetry of $T$ and thus of $T^{-1}$.
\end{proof}
 
Combining Lemma \ref{iGeOneHalf} and \ref{iSmallerOneHalf}, we get:
\begin{proposition}
Assuming $i\le j$, let  
\begin{align*}
k_\delta\ \ge \ \max (10, \delta+\log_2 \vert\tau\vert -\log_2 3)\qquad \text{and} \qquad N_\delta \ := \ \left\lceil \frac{k_\delta}{\phi}\right\rceil\ .
\end{align*}
Moreover, define 
\begin{align*}
\left(T_\delta^{-1}\right)_{ij}\ =\  \left\{\begin{array}{ll} (-1)^{i+j}\ \frac 1\tau \ \ \left( r_+^{i-j}  -r_+^{-i-j} \right) & \text{if}\ i< \left\lceil \frac n2\right\rceil \\ \\  (-1)^{i+j}\ \frac 1\tau \ \ \left( r_+^{i-j}  -r_+^{-2(n+1)+i+j} \right) & \text{if}\ i\ge \left\lceil \frac n2\right\rceil  \end{array}\right. .
\end{align*}
Then, for all $n\ge N_\delta$, $\left(T_\delta^{-1}\right)_{ij}$ and $\left(T^{-1}\right)_{ij}$ are numerically equal in $\delta$-arithmetic.
\end{proposition}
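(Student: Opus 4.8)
The plan is to assemble the Proposition directly from Lemma \ref{iGeOneHalf} and Lemma \ref{iSmallerOneHalf} by making the quantitative bound $3\left\vert \frac{1}{\tau 2^k}\right\vert$ match the numerical-equality threshold $2^{-\delta}$ of Definition~1. First I would fix $\delta \in \mathbb N_{>0}$ and observe that the two lemmas already furnish, for each admissible $k$ and each $n \ge \lceil k/\phi\rceil$, exactly the two branch estimates that appear in the definition of $\left(T_\delta^{-1}\right)_{ij}$: the branch $i < \lceil n/2\rceil$ is Lemma \ref{iSmallerOneHalf}, and the branch $i \ge \lceil n/2\rceil$ is Lemma \ref{iGeOneHalf}. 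So the whole content of the Proposition is a bookkeeping step: choose $k$ so that $3\left\vert \frac{1}{\tau 2^k}\right\vert \le 2^{-\delta}$.

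Next I would solve that inequality for $k$. Taking $\log_2$ of $3/(\vert\tau\vert 2^k) \le 2^{-\delta}$ gives $\log_2 3 - \log_2\vert\tau\vert - k \le -\delta$, i.e. $k \ge \delta + \log_2\vert\tau\vert - \log_2 3$. Since the lemmas also require $k \ge 10$, any $k$ satisfying $k \ge \max(10,\ \delta + \log_2\vert\tau\vert - \log_2 3)$ works; this is precisely the defining condition on $k_\delta$. I would also note that one should take $k_\delta \in \mathbb N$ (the lemmas are stated for $k\in\mathbb N$), so strictly one picks the least natural number above that max, but since $N_\delta = \lceil k_\delta/\phi\rceil$ is monotone in $k_\delta$ this causes no trouble and can be mentioned in one line.

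Then the argument closes: with $N_\delta := \lceil k_\delta/\phi\rceil$ and any $n \ge N_\delta$, apply Lemma \ref{iSmallerOneHalf} when $i < \lceil n/2\rceil$ and Lemma \ref{iGeOneHalf} when $i \ge \lceil n/2\rceil$; in either case the left-hand side is the quantity $\left\vert \left(T^{-1}\right)_{ij} - \left(T_\delta^{-1}\right)_{ij}\right\vert$, and it is bounded by $3\left\vert \frac{1}{\tau 2^{k_\delta}}\right\vert \le 2^{-\delta}$, which by Definition~1 means $\left(T_\delta^{-1}\right)_{ij}$ and $\left(T^{-1}\right)_{ij}$ are numerically equal in $\delta$-arithmetic. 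The case $i>j$ is handled by the symmetry (persymmetry) of $T$ already invoked before the Proposition.

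There is essentially no analytic obstacle here; the only thing to be careful about is that the two lemmas share the \emph{same} hypothesis $n \ge \lceil k/\phi\rceil$ with the \emph{same} $k$, so that a single threshold $N_\delta$ governs both branches simultaneously — this is why $k_\delta$ must dominate the $\max(10,\dots)$ bound uniformly rather than branch-by-branch. A minor point worth a sentence is that $\phi>0$ by \eqref{subst2log}, so $N_\delta$ is a well-defined finite natural number.
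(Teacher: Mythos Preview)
Your approach is exactly the paper's: the paper offers no proof beyond the sentence ``Combining Lemma~\ref{iGeOneHalf} and~\ref{iSmallerOneHalf}, we get'', and you have correctly unpacked that combination and the bookkeeping behind the choice of $k_\delta$. One small slip: from $\log_2 3 - \log_2\vert\tau\vert - k \le -\delta$ you should obtain $k \ge \delta + \log_2 3 - \log_2\vert\tau\vert$, not $k \ge \delta + \log_2\vert\tau\vert - \log_2 3$; the sign reversal you wrote happens to match what looks like a typo in the paper's own statement of the Proposition, so your argument is internally right but the final displayed threshold (here and in the paper) has the two logarithms swapped.
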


\section{Further Simplification and Algorithm}

Due to the bisymmetry of $T$ -- and thus of $T^{-1}$ -- it suffices to calculate the entries of one of the four sections of the subdivision of $T^{-1}$ which is induced by the main- and the counter-diagonal. All other entries can be derived using the appropriate reflections. Our attention will thus be restricted to the case $n\ge N_\delta$, $i\le j$ and $i+j\le n+1$, which especially means that $i\le (n+1)/2$.

The first choice in the design of an actual algorithm for the inverse calculation is to decide whether to first perform the subtraction of the terms in the brackets and then multiply the result with $(-1)^{i+j}\frac 1\tau$ or vice versa. We investigate the options: 

At first note that the value of the $r_+^{i-j}$ and thus of the $(-1)^{i+j}\frac 1\tau r_+^{i-j}$ exclusively depends on the horizontal distance of the corresponding entry from the main diagonal. They numerically equal zero if either
\begin{align*}
i-j \ < \ - \frac{\delta}{\phi} \quad \text{or}\quad i-j \ < \ - \frac{\delta}{\phi}\ +\ \frac{\log_2\vert \tau\vert}{\phi}\ =: -\alpha_\delta \ , 
\end{align*}
respectively. Recalling that by assumption $\vert r_+\vert > 1$, we have 

\begin{align}\label{ge}
\vert r_+^{i-j}\vert > \vert r_+^{-i-j}\vert\ .
\end{align}
Hence, if $r_+^{i-j}\ / \ \frac 1\tau r_+^{i-j}$ is numerically zero, then so is $r_+^{-i-j}\ / \ \frac 1\tau r_+^{-i-j}$ -- which implies that for both possible approaches the $T_\delta^{-1}$ are band matrices with constant bandwidth, say $b_{r, \delta}$ [which is necessarily $\le \max(\frac \delta\phi, \alpha_\delta)$], for all $n\ge N_\delta$.

Moreover, since $i\le j$ and thus $2i\le i+j$, the terms $r_+^{-i-j}$ and $(-1)^{i+j}\frac 1\tau r_+^{-i-j}$ numerically equal zero if either 
\begin{align*}
-2i \ < \ - \frac{\delta}{\phi} \qquad \text{or}\qquad -2i \ < \ - \alpha_\delta \ .
\end{align*}
This implies that for all $n\ge \max (N_\delta, \frac \delta\phi, \alpha_\delta)$ the number of rows for which the $(T^{-1}_t)_{ij}$ do \textit{not} simply equal $(-1)^{i+j}\frac 1\tau r_+^{i-j}$ is a \textit{constant}, say $c_{r, \delta}$.
The following pseudocode, which we will refine in the sequel, already establishes the statement of this article's title: that the explicit inverse can be calculated, up to working precision, in $\mathcal{O}(n)$, using only $\mathcal{O}(1)$ floating point operations.

\begin{algorithm}
\caption{ Outline explicit tridiagonal inversion}

\begin{algorithmic}[1]
 
\State $T_t^{-1}=zeros(n,n)$

\For {$i=1:c_{r, \delta}$}
\For {$j=i:i+b_{r, \delta}-1$}
\State $T_t^{-1}(i,j)= (-1)^{i+j}\frac 1\tau(r_+^{i-j}-r_+^{-i-j})$
\EndFor
\EndFor

\State OtherValues = zeros$(b_{r, \delta},1)$

\For {$i=1:b_{r, \delta}$}

\State OtherValues $(i) = (-1)^{1+i}\frac 1\tau r_+^{1-i}$

\EndFor

\For {$i=c_{r, \delta}+1: \left\lceil \frac n2 \right\rceil$}

\State $T_t^{-1}(i,i: i+b_{r, \delta}-1)$=OtherValues$(:)$

\EndFor

\State Use symmetries of $T_t^{-1}$ to fill up the rest of the matrix. 

\State \textbf{return} $T_t^{-1}$

\end{algorithmic}

\end{algorithm}


As a first step towards the refinement of the algorithm outline we note that if $\vert x\vert >1$, then $\vert x \vert > \vert \sqrt{x^2-1}\vert$. Which implies  
\begin{align*}
\operatorname{sign}\left(r_+\right)\ = \ \operatorname{sign}\left( x\right) \ , 
\end{align*}
where $\operatorname{sign}$ denotes the signum function.

We now distinguish the two cases $x<0$, which is the case if and only if $a$ and $b$ have different signs, and $x>0$. 

$x\ <\ 0:$ Then  
\begin{align}\label{absRplus}
(-1)^{i+j}r_+^{i-j}\ =\ (-1)^{i+j}(-1)^{i-j} \vert r_+\vert^{i-j} \ = \ (-1)^{2i}\vert r_+\vert^{i-j} \ = \ \vert r_+\vert^{i-j} \ .
\end{align}
Recalling \eqref{ge},  
\begin{align*}
\operatorname{sign}\left(r_+^{i-j}  -r_+^{-i-j}\right)\ = \ \operatorname{sign}\left(r_+^{i-j}  \right) \ .
\end{align*}
Hence, $T^{-1}$ is either nonnegative or nonpositive, depending on the sign of $\tau$. 

Moreover, $i-j$ is odd if and only if $i+j\ (=i-j+2j)$ is odd. Thus it holds 
\begin{align*}
\operatorname{sign}\left(r_+^{i-j}\right)\ = \ \operatorname{sign}\left( r_+^{-i-j}\right) \ , 
\end{align*}
which, by \eqref{absRplus}, implies $(-1)^{i+j}r_+^{-i-j} =  \vert r_+\vert^{-i-j}$ and thus
\begin{align*}
(-1)^{i+j}\ \frac 1\tau \  \left( r_+^{i-j}  -r_+^{-i-j} \right)\ = \ \frac 1\tau  \ \left( \vert r_+\vert^{i-j}  -\vert r_+\vert^{-i-j} \right)  \ .
\end{align*}

$x\ >\ 0:$ Then  
\begin{align*}
(-1)^{i+j}\ \frac 1\tau  \ \left( r_+^{i-j}  -r_+^{-i-j} \right)\ = \ (-1)^{i+j}\frac 1\tau  \ \left( \vert r_+\vert^{i-j}  -\vert r_+\vert^{-i-j} \right)  \ ,
\end{align*}
which implies that the diagonals of $T^{-1}$ have alternating signs. The relevance of this observation lies in the fact that it allows us to assign the signs diagonal-wise and thus circumvent the effort of the sign calculations.

\begin{algorithm}
\caption{ Refined explicit tridiagonal inversion}

\begin{algorithmic}[1]

 \Procedure{EXPLICIT INVERSE}{a, b, n , $\delta$}
 \Comment{$\delta$ is a precision parameter.} 
 \State $r_+= a/2b$
 \State $r=-r_+$
 \Comment{The sign switch allows to omit computation of the powers of $-1$.} 
 \State $b_{r, \delta}=\alpha_\delta$
 \State $Values=zeros(1,b_{r,\delta})$
 \State $Values(1)=1/\tau$
 \For {$i=2:b_{r, \delta}$}
 \State $Values(i)=Values(i-1)/r$
 \EndFor
 
 \State $T_\delta^{-1}=zeros(n,n)$ 
 \Comment{Inefficient. Will be stored differently in real life applications.}

\For {$i=1:\left\lceil \frac n2\right\rceil$}
\State $T_\delta^{-1}(i, i:i+b_{r, \delta}-1)=Values(:)$
\EndFor

\State $c_{r, \delta}=b_{r, \delta}-2$

\For{$i=1:c_{r, \delta}$} 
\For{$j=i:b_{r, \delta}-1$}
\State $T_\delta^{-1}(i,j)=T_\delta^{-1}(i,j)-Values(i+j)$
\EndFor
\EndFor

\For{$i=\left\lceil \frac n2\right\rceil+1:n-b_{r, \delta}$}\Comment{1st use of persymmetry.}

\For{$j=i:i+b_{r, \delta}-1$}
\State $T_\delta^{-1}(i,j)=T_\delta^{-1}(n-j+1,n-i+1)$
\EndFor
\EndFor

\For{$i=n-b_{r, \delta}+1:n$}\Comment{2nd use of persymmetry.}
\For{$j=i:n$}\Comment{2nd loop instead of evaluation $\max(n, i+b_{r, \delta}-1)$.}
\State $T_\delta^{-1}(i,j)=T_\delta^{-1}(n-j+1,n-i+1)$
\EndFor
\EndFor

\For{$i=2:b_{r, \delta}$}\Comment{1st use of symmetry.}
\For{$j=1:i-1$}
\State $T_\delta^{-1}(i,j)=T_\delta^{-1}(j,i)$
\EndFor
\EndFor

\For{$i=b_{r, \delta}+1$}\Comment{2nd use of symmetry.}
\For{$j=i-b_{r, \delta}+1:i-1$} \Comment{2nd loop instead of evaluation $\min(1, i-b_{r, \delta}+1)$.}
\State $T_\delta^{-1}(i,j)=T_\delta^{-1}(j,i)$
\EndFor
\EndFor

\State \textbf{return} $T_t^{-1}$
\EndProcedure
\end{algorithmic}

\end{algorithm}

We may now answer the initial question, whether we should first subtract or multiply. For a single term $\frac 1\tau  \left( r_+^{i-j}  -r_+^{-i-j} \right)$ it saves a multiplication to perform the subtraction of $r_+^{i-j}$ and $r_+^{-i-j}$ first. But, defining $b_{r, \delta}:=\alpha_\delta$, the set $$B:= \left \{\frac 1\tau r_+^{1-j}: 1\le j\le b_{r, \delta}\right \}$$ contains all terms $\frac 1\tau r_+^{i-j}$ and $\frac 1\tau r_+^{-i-j}$ that do not equal zero numerically. So it suffices to perform $b_{r, \delta}$ floating point multiplications to compute the values in $B$. 

As for the subtractions: Since it holds $(i-j)-(-i-j)=2i \ge 2$, let $c_{r, \delta}:=  b_{r, \delta} -2$. The number of tuples $(i,j)$ with $i, j\ge 1$ and $i\le j$ such that $i+j\le c_{r, \delta}$ -- which is also the number of $\frac 1\tau r_+{-i-j}$ that do not equal zero numerically and thus the number of floating point subtractions to be performed -- equals
$(c_{r, \delta}-1)+ (c_{r, \delta}-2) + \dots + 1= c_{r, \delta}(c_{r, \delta}-1)/2$ tuples $(i, j)$ such that $i+j \le l$. 

Hence, performing the multiplications first and then the additions, we merely need to perform $\alpha_\delta$ floating point additions and $(\alpha_\delta-2)(\alpha_\delta-3)/2$ floating point multiplications.

These observations are summarized in the refined pseudocode. Note that the algorithm is optimized for a minimal operations count, not memory efficiency. In real life situations one would certainly \textit{not} store the inverse in an $n\times n$ array. However, the building blocks of the algorithm can be extracted and plugged into a memory efficient implementation. 

For this we note that $A:= T^{-1}_\delta$ can be decomposed into three blocks $B,B'$ and $C$ as follows: Let $w$ be the number of rows in $A$, where one or more nonzero entries are a sum ($w$ is even due to the bisymmetry of $A$) and define $$d\ :=\ \max\left(\frac w2,b_{r,\delta}\right)\; .$$  
Then designate the first and last $d$ rows of $A$ as $B$ and $B'$, respectively. The block in between is $C$. Clearly, $B'$ equals $B$ rotated by $180^\circ$. It thus suffices to store one of these blocks, which costs $\mathcal O(1)$ memory (more precisely $\mathcal O(\delta)$ -- but this is the same for all practical purposes). Let $d<i<n-d$. Then shifting the nonzero entries of the $i$-th row of $A$ by one to the right yields row $i+1$. Hence it suffices to store the nonzero enries of a single row of $C$, which gives an overall memory requirement of $\mathcal O(1)$.

Moreover, a feature of the algorithm which has proven to be immensely practical is that said values have to be computed (and stored) \textit{only once} and can then be used to construct the inverse for \textit{any} sufficiently large $n$.

\section{Equation solving (outline) and outlook}

Define $A,B, B', C, d, r$ and the bandwidth $b_{r,\delta}=:\alpha$ as in the last section. We want to (approximatively) solve a linear system of the form $$Tx\ =\ b\ \Leftrightarrow \ x\ =\ T^{-1}b\ \approx\ T^{-1}_\delta b\ =\ Ab\; .$$ A naive but inefficient approach would be to simply multiply $b$ with $A$ (inefficient, since the bandwidth $b_{r,\delta}$ of $A$ is likely larger than the constants in current algorithms for the solution of tridiagonal systems such as those presented in \cite{RefHockney} and \cite{RefMcNally}). 

We propose another procedure: Decompose $x$ into $x^{(1)}:= Bb, x^{(2)}:= B'b$ and $x^{(3)}:= Cb$. The vectors $x^{(1)}$ and $x^{(2)}$ can be computed in $\mathcal O(\delta)\approx \mathcal O(1)$. Let $d<i<n-d$. Then 
$$x^{(3)}_{i-d}\ =\ e_i^TAb\ =\ \sum_{j=i-\alpha}^{i+\alpha}a_{ij}b_j\ =\ \sum_{j=i-\alpha}^ia_{ij}b_j\ + \sum_{j=i+1}^{i+\alpha}a_{ij}b_j\ =:\ \varphi_i + \varphi_i' $$ and it can easily be verified that
\begin{align*}
x^{(3)}_{i-d+1}\ &=\ \sum_{j=i-\alpha+1}^{i+1}a_{ij}b_j\ + \sum_{j=i+2}^{i+\alpha+1}a_{ij}b_j \\
&=\ \frac{\varphi_i-a_{i,i-\alpha}\cdot b_{i-\alpha}}{r}\ +\  r\cdot \varphi_i'\ +\ a_{i+1,i+\alpha+1}\cdot b_{i+\alpha+1} \ .
\end{align*}
This row-to-row update can be refined into a repeatedly applicable update scheme. For this we remark that the block $C$ is Toeplitz. Now choose $d<i<n-d$ arbitrarily and define 
$$b'\ := a_{i,i-\alpha}\cdot b\qquad \text{and}\qquad b''\ := a_{ii}\cdot b\ .$$  
These rescalings cost $2n$ multiplications. (Albeit, since we are scaling vectors, these can likely be implemented more efficiently than $2n$ arbitrary multiplications.) Now define
$$\varphi_{i+1}\ :=\ \frac{\varphi_i-b'_{i-\alpha}}{r} + b''_{i+1}$$ and 
$$\varphi'_{i+1}\ :=\ r\cdot \varphi'_i -b''_{i+1} + b_{i+1+\alpha}\ .$$
Performing the updates in this manner costs $2n$ additions plus $2n$ fused multiply-adds plus another $n$ additions of the $\varphi_i$ and $\varphi_i'$. One might argue (this has to be investigated further) that the subtraction of $b'_{i-\alpha}$ in the first equality can be omitted due to its negligible numerical impact. This would spare $n$ subtractions and reduce the overall cost of the so-performed update scheme to roughly $4n + \mathcal O(1)$ multiplications and additions, which is about in line with the computational cost of the approximative tridiagonal Toeplitz solver presented in \cite{RefMcNally} -- albeit the aforementioned algorithm by McNally et. al. does not require symmetry.

The proposed update scheme has the following advantages and disatvantages:
\begin{itemize}
\item Pro: Easy to implement.
\item Pro: Fast.
\item Pro: Communication free if block $C$ is decomposed further and thus parallelizable.
\item Con: Possibly unstable. 
\end{itemize}

To give definitive statements about the practical usefulness of the above update scheme, these aspects have to be be studied further. Another possibly productive line of investigation is the extension of the statements of this note to non-symmetric Toeplitz matrices. We will follow up on these questions.




\end{document}